\newcommand\eqdef{\stackrel{\mathrm{def}}{=}}
\newcommand{\spp}{\mathop{\mathrm{span}}}
\newtheorem{proposition}{\bf Proposition}[section]
\newtheorem{theorem}{\bf Theorem}[section]
\newtheorem{proof}{\bf Proof}[section]
\newcommand*{\qed}{\hfill\ensuremath{\square}}%
\begin{document}

\title{Regularization of singularities in the weighted summation of Dirac-delta functions for the spectral
solution of hyperbolic conservation laws\thanks{This research was supported by the Air Force Office of Scientific Research (AFOSR-F9550-09-1-0097),
National Science Foundation (NSF-DMS-1115705) and the Computational Science Reseach Center (CSRC) at San Diego State University.
}
}


\author{
Jean-Piero Suarez and Gustaaf B. Jacobs\\
Aerospace Engineering and Computational Science Research Center,\\
 San Diego State University, San Diego, CA\\
gjacobs@mail.sdsu.edu
}


\maketitle

\begin{abstract}
Singular source terms
expressed as weighted summations of Dirac-delta functions are regularized
through approximation theory with convolution operators.
We consider the numerical solution of scalar and one-dimensional hyperbolic
conservation laws with the singular source by spectral Chebyshev collocation methods.
The regularization
is obtained by convolution with a high-order compactly supported Dirac-delta approximation whose overall accuracy
is controlled by the number of vanishing moments,
degree of smoothness and length of the support (scaling parameter).
An optimal scaling parameter that leads to a high-order accurate representation of the singular source at smooth parts
and full convergence order away from the singularities in the spectral solution is derived.
The accuracy of the
regularization and the spectral solution is assessed by solving an advection and Burgers equation with smooth initial data.
Numerical results illustrate the enhanced accuracy of the spectral method through the proposed regularization.
\end{abstract}

\section{Introduction}
This paper continues the work started in \cite{jp_siam_1}, on the regularization of singular source terms in the numerical solution
of hyperbolic conservation laws using spectral methods. We focus our attention on the scalar partial differential equations (PDEs)
\begin{equation}\label{eq_scalar_conser_law}
\dfrac{ \partial }{ \partial t  }Q(x,t) +  \dfrac{ \partial }{ \partial x }F(Q) = \underbrace{\sum_{i = 0}^{N_{p}} \dfrac{S(\xi_{i})}{n(\xi_{i})} K_{\varepsilon}(x - \xi_{i})}_{S_{\varepsilon}(x)},
\end{equation}
where $(x,t) \in \Omega \times [0,\infty)$ for some compact set $\Omega \subset \mathbb{R}$, $Q$ represents a conserved quantity,
$F$ is the flux function (smooth), $S \in L^{1}(\mathbb{R})$, $\{\xi_{i}\}_{i=0}^{N_{p}} \subset \Omega$ is a set of points where $S$
is known, $n(\xi_{i})$ is the number density for $\xi_{i}$, and the parametric family of functions $\{K_{\varepsilon}(\,\cdot\,)\;:\;\varepsilon > 0\} \subset L^{1}(\mathbb{R})$ is an approximation to the singular Dirac-delta distribution as the scaling parameter $\varepsilon$ tends to zero.
The weighted summation of Dirac-delta functions in the right hand side of (\ref{eq_scalar_conser_law}) represents a reconstruction of the function $S$ from the discrete set $\{\xi_{i}\}_{i=0}^{N_{p}}$.
Such reconstruction is a fundamental operation in many applications, including image and signal processing \cite{guus_friend,image_interpolation_5,image_interpolation_1,image_interpolation_4,image_interpolation_3,image_interpolation_2}, and particle methods
\cite{Abe,eastwood,jacobs,singular_source_1,sph_monaghan_2005}.

Our interest in (\ref{eq_scalar_conser_law}) is motivated by the numerical simulation of particle-laden flows with shocks using the
particle-source-in-cell (PSIC) method, introduced in \cite{psic}. In this framework, $\{\xi_{i}\}_{i=0}^{N_{p}}$
denotes the position of the $i$th particle, and $S$ is a weight function describing the influence of each particle onto the carrier
flow.

The full analysis of fluid particle interaction at high speeds involves the computation of the complete flow over each particle,
the tracking of individual solid or liquid complex particle boundaries along their paths, and the tracking
of shock waves in the moving frame. These individual computational components are difficult to resolve and are
currently barely within reach, even with the latest advances of computational technologies.

The PSIC method facilitates affordable computations of real geometries while accurately
representing individual particle dynamics. The main components of the PSIC method for computations of shocked particle-laden
flows are the following:
\begin{enumerate}[(i)]
\item Solution of hyperbolic conservation laws governing the carrier flow (Eulerian frame).
\item Integration of ordinary differential equations for particle's position and velocity (Lagrangian frame).
\item Interpolation of the flow properties at the particle's position.
\item Evaluation of a singular source term through weighted summation of Dirac-delta functions to account particle's effect on the carrier flow.
\end{enumerate}

High-order schemes for computations of (i), (ii), and (iii),
are available in the literature. For example, Jacobs and Don \cite{jacobs} developed and assessed a high-order PSIC algorithm for the computation of
the interaction between shocks, small scale structures, and liquid and/or solid particles in high-speed engineering applications. There,
computations of (i), (ii), and (iii) are performed by using the improved weighted essentially non-oscillatory (WENO)  scheme
\cite{don_wenoZ_2}, third order total variation diminishing (TVD) Runge–Kutta scheme \cite{Shu}, and essentially non-oscillatory interpolation  \cite{shu_eno_weno}, respectively. However,
(iv) is done by approximating the singular Dirac-delta with cardinal B-splines \cite{Abe}, which are second order accurate at best, on uniform grids.

The high-order accurate representation of the singular source term on general grids has not been fully accomplished \cite{jp_siam_1}, limiting
the accuracy of PSIC methods.  The presence of the Dirac-delta in (\ref{eq_scalar_conser_law}) leads to sharp particle interface
discontinuities in the source term. This is illustrated in Figure \ref{fig_single_and_multiple_particles}, for a distribution
of clustered particles and an isolated particle. The discontinuous interfaces may yield to nonphysical oscillations in the
solution or low order of accuracy away of the singularities, even when considering only one particle \cite{jp_siam_1}.

\begin{figure}[h]
\centering
\mbox{ \subfigure{ \includegraphics[width=5.80cm]{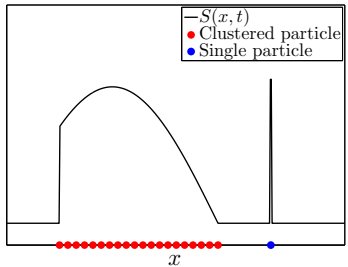} } }
\caption{One-dimensional representation of particle interface discontinuities in the singular source term $S_{\varepsilon}$ in (\ref{eq_scalar_conser_law}).}
\label{fig_single_and_multiple_particles}
\end{figure}

As is well-known, the nonlinearity in the flux function and the absence of dissipation terms with regularizing effect in (\ref{eq_scalar_conser_law}) may
lead to discontinuous solutions within finite time, even for smooth initial condition. In addition, the source term $S_{\varepsilon}$ can produce
discontinuities in the solution or any of its derivatives for $\varepsilon$ arbitrarily small, regardless of the nonlinearity of the flux
function and also the smoothness of the initial condition. When using  spectral methods to find the approximate solution
of (\ref{eq_scalar_conser_law}), it can hence be expected that Gibbs type phenomena cause loss of accuracy and numerical instability.

Under the assumption of a dense (large $N_{p}$) distribution of particles, i.e., sufficient number of particles
per computational cell, $S_{\varepsilon}$ can be regarded as a numerical approximation by quadrature of the convolution integral
\begin{equation}\label{eq_S_epsi_convolution}
\displaystyle (S \ast K_{\varepsilon})(x) \eqdef \int\limits_{\mathbb{R}} S(\xi)K_{\varepsilon}(x-\xi)\,\mathrm{d}\xi.
\end{equation}
Unfortunately, such ideal particle distribution can be destroyed by the influence of the carrier flow, scattering the
particles and creating a sparse distribution.

The approximation of Dirac-delta plays an important role in the accuracy of the spectral
solution to (\ref{eq_scalar_conser_law}) and the numerical evaluation of the source term. Top-hat and
piecewise linear functions are widely used to represent $K_{\varepsilon}$; however, their lack of smoothness produces
Gibbs oscillations in the spectral solution and inaccuracies in the quadratures rules to approximate
(\ref{eq_S_epsi_convolution}). High-order accurate and smooth representations of $K_{\varepsilon}$ on uniform grids have been proposed
in \cite{Monaghan,Monaghan_2,ryan_siac,shu_2013}, which are based on B-Splines and Gaussian functions. These regularization
techniques disrupt the convergence rate of spectral methods in the solution of hyperbolic conservation laws with
only one moving particle \cite{jp_siam_1}.

In \cite{jp_siam_1}, the precursor of this paper, we developed a regularization technique with optimal scaling. This approach is
based on a class of high-order compactly supported piecewise polynomials introduced in \cite{Tornberg}, to regularize a single
time-dependent  Dirac-delta source in spectral
approximations of one-dimensional hyperbolic conservation laws. The piecewise polynomials provide a high-order approximation to the Dirac-delta whose
overall accuracy is controlled by two conditions: number of vanishing moments and smoothness. Our proposed optimal scaling has shown to lead
to optimal order of convergence in the spectral solution of scalar (linear and nonlinear) problems and it has been successfully
implemented to solve the nonlinear system of Euler equations governing the compressible fluid dynamics with shocks and particles,
using the multidomain hybrid spectral-WENO scheme in \cite{spectralWENO}.

In this work, we develop a high-order accurate regularization technique with optimal scaling to approximate the source term
$S_{\varepsilon}(x)$  for a large number of clustered stationary particles. We seek for high-order accurate
solutions with spectral methods to (\ref{eq_scalar_conser_law}). The regularization is based on
the approximation of the convolution integral (\ref{eq_S_epsi_convolution}) by quadrature rules, substituting $K_{\varepsilon}$ by the
high-order regularization in \cite{jp_siam_1}. We derive an optimal scaling that ensures high-order convergence to $S$ at smooth parts,
according to the number of vanishing moments and the smoothness of $K_{\varepsilon}$.

The paper is structured as follows. Starting with the approximate Dirac-delta introduced
in \cite{Tornberg}, Section \ref{section_regularization} shows how a smooth and high-order
accurate regularization with optimal scaling that converges to (\ref{eq_S_epsi_convolution})
can be constructed. In Sections \ref{section_singular_advection_equation} and \ref{section_singular_burgers_equation} we assess the
accuracy of the spectral Chebyshev collocation method in the solution of a singular advection and Burgers equation with smooth
initial data, regularizing the respective source term with the optimal scaling. Finally, conclusions and direction for future work are
presented in Section \ref{conslusions}.

\section{Regularization of singular sources}\label{section_regularization}
Let $m \geq 1$ and $k \geq 0$ be integers, and let $P ^ {m,k}:[-1,1] \rightarrow \mathbb{R}$ be the polynomial
uniquely determined by the following conditions:
\begin{eqnarray}
\int\limits_{-1}^{1} P ^ {m,k} (\xi)\,\mathrm{d}\xi     & = & 1,                             \label{mass_conservation} \\
\bigl(P^{m,k}\bigr)^{(i)}(\pm 1)                        & = & 0 \mbox{ for } i = 0,\ldots,k, \label{smoothness}        \\
\int\limits_{-1}^{1} \xi^{i}P^{m,k}(\xi)\,\mathrm{d}\xi & = & 0 \mbox{ for } i = 1,\ldots,m. \label{moments}
\end{eqnarray}
Then, $P^{m,k}$ is a $2\left(\lfloor \frac{m}{2} \rfloor + k + 1\right)$th degree polynomial, containing only even powers of $\xi$, which can
be written as
\begin{equation}\label{eq_P_m_k}
P^{m,k}(\xi) = cw_{k}(\xi)Q(\xi),\;\; Q(\xi) = 1 - \sum_{j=1}^{\lfloor m/2 \rfloor} \langle 1, r_{2j} \rangle_{w_{k}} \, r_{2j}(\xi),
\end{equation}
where $\langle \cdot , \cdot \rangle_{w_{k}}$ denotes the usual weighted inner product on $C^{0}([-1,1])$ with positive weight function
$w_{k}(\xi) = (1 - \xi^{2})^{k+1}$, $\{r_{i}\}_{i=1}^{m}$ is an orthonormal Gram-Schmidt basis for $\spp\{\xi^{i}\}_{i = 1}^{m}$ with respect
to $\langle \cdot , \cdot \rangle_{w_{k}}$ and $c = 1 / \langle Q , 1 \rangle_{w_{k}}$ \cite{Tornberg}.

The polynomial $(\ref{eq_P_m_k})$ generates a class of compactly-supported piecewise polynomials that converge to the Dirac-delta in
the distributional sense as $\varepsilon \rightarrow 0^{+}$, given by \cite{Tornberg}
\begin{equation}\label{eq_dirac_delta_m_k}
\delta^{m,k}_{\varepsilon}(x) \eqdef
\left\{
\begin{array}{cc}
  \dfrac{1}{\varepsilon}P^{m,k}\left(\dfrac{x}{\varepsilon}\right), & |x| \leq \varepsilon, \\
  0, & |x| > \varepsilon.
\end{array}
\right.
\end{equation}
The conditions above on $P^{m,k}$ determine the convergence and the smoothness of (\ref{eq_dirac_delta_m_k}).
Specifically, condition (\ref{mass_conservation}) plus the compactness of the support are sufficient
to guarantee the convergence. Condition (\ref{smoothness}) ensures $C^{k}(\mathbb{R})$ smoothness.
Condition (\ref{moments}) is known as the moment condition. It determines the order of convergence in the approximation,
i.e. $\mathcal{O}(\varepsilon^{m+1})$ for $m$ vanishing moments.

Hereinafter, we shall show that (\ref{eq_dirac_delta_m_k}) provides arbitrary accuracy with any desired
smoothness to regularize Lebesgue integrable functions that are smooth on compact subset of $\mathbb{R}$.
The operation of convolution provides the tool to build a high-order accurate and smooth approximations.

Concretely, given $S \in L^{1}(\mathbb{R})$, let $S_{\varepsilon}^{m,k}$ be the function defined by the convolution
\begin{equation}\label{eq_S_m_k}
S_{\varepsilon}^{m,k}(x) \eqdef (S \ast \delta^{m,k}_{\varepsilon})(x) = \int\limits_{\mathbb{R}} S(\xi)\delta^{m,k}_{\varepsilon}(x-\xi)\,\mathrm{d}\xi = \int\limits_{x - \varepsilon}^{x + \varepsilon} S(\xi)\delta^{m,k}_{\varepsilon}(x-\xi)\,\mathrm{d}\xi.
\end{equation}
Then, (\ref{eq_S_m_k}) is a $C^{k}(\mathbb{R})$ class function that converges to $S$ in the $L^{1}(\mathbb{R})$ sense. This follows from
Proposition \ref{prop_smoothness_S_m_k} and Theorem \ref{thm_convergence_S_m_k}, a classical results of real analysis \cite{folland}.

\begin{proposition}\label{prop_smoothness_S_m_k}
Suppose that $f \in L^{1}(\mathbb{R})$, $g \in C^{k}(\mathbb{R})$ and the derivatives $g^{(i)}$ are bounded for $i = 0, \ldots, k$. Then
$f \ast g \in C^{k}(\mathbb{R})$ and $\bigl(f \ast g\bigr)^{(i)} = f \ast g^{(i)}$.
\end{proposition}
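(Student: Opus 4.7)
The plan is to prove the proposition by induction on $k$, using the Lebesgue dominated convergence theorem (DCT) as the main analytic tool. Before starting the induction, I would record that $f \ast g$ is well-defined everywhere: since $g$ is bounded by some constant $M_0 \eqdef \|g\|_\infty$, the integrand $f(\xi)g(x-\xi)$ is dominated pointwise by $M_0 |f(\xi)|$, which is integrable by hypothesis. The same bound gives $\|f\ast g\|_\infty \leq M_0 \|f\|_1$.

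For the base case $k = 1$, I would form the difference quotient
\begin{equation*}
\frac{(f\ast g)(x+h) - (f\ast g)(x)}{h} = \int_{\mathbb{R}} f(\xi)\,\frac{g(x+h-\xi) - g(x-\xi)}{h}\,\mathrm{d}\xi,
\end{equation*}
and apply the mean value theorem to the inner quotient: for each $\xi$ there is $\theta = \theta(\xi,h)\in(0,1)$ such that the quotient equals $g'(x-\xi+\theta h)$. The hypothesis that $g'$ is bounded, say by $M_1$, yields the $h$-independent majorant $M_1|f(\xi)|$, which is integrable. Continuity of $g'$ (part of $g\in C^1$) ensures the quotient converges pointwise in $\xi$ to $g'(x-\xi)$ as $h\to 0$. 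DCT then allows passing to the limit under the integral, giving $(f\ast g)'(x) = (f\ast g')(x)$.

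Next I would show this derivative is continuous. Let $x_n \to x$; then $f(\xi)g'(x_n-\xi)\to f(\xi)g'(x-\xi)$ pointwise, again dominated by $M_1|f(\xi)|$, so DCT gives $(f\ast g')(x_n)\to(f\ast g')(x)$. This places us in a position to iterate: under the inductive hypothesis $(f\ast g)^{(j)} = f\ast g^{(j)}\in C^{0}(\mathbb{R})$ for $j \le k-1$, the function $g^{(k-1)}$ lies in $C^{1}(\mathbb{R})$ and has bounded derivative $g^{(k)}$, so the base case applied to the pair $(f, g^{(k-1)})$ yields $(f\ast g^{(k-1)})' = f\ast g^{(k)}$, i.e.\ $(f\ast g)^{(k)} = f\ast g^{(k)}$, and continuity follows from the same DCT argument.

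There is no serious obstacle; the proof is essentially the bookkeeping of a standard DCT argument. The only point that needs care is selecting a single $h$-independent integrable majorant for the difference quotient, which is exactly what the boundedness hypothesis on $g^{(i)}$ combined with the MVT is designed to provide. Without the boundedness assumption one would need $f$ of compact support (or some integrability of a weight) to control the quotient, so it is worth flagging in the write-up that the boundedness of the derivatives is precisely what makes the interchange of limit and integral legal.
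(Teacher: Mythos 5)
Your argument is correct and complete: the mean value theorem supplies the $h$-independent integrable majorant $M_1|f(\xi)|$ for the difference quotient, dominated convergence justifies differentiating under the integral and the continuity of $f\ast g^{(i)}$, and the induction on $k$ is sound. The paper itself gives no proof of this proposition --- it is stated as a classical result and deferred to the cited real-analysis reference --- and your write-up is exactly the standard textbook argument that citation points to, so there is nothing to reconcile.
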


\begin{theorem}\label{thm_convergence_S_m_k}
Let $\varphi \in L^{1}(\mathbb{R})$ such that $\displaystyle \int\limits_{\mathbb{R}} \varphi (\xi)\,\mathrm{d}\xi = 1$ and let $\varphi_{\varepsilon}(x) \eqdef \dfrac{1}{\varepsilon}\varphi \left(\dfrac{x}{\varepsilon}\right)$.
If $f \in L^{1}(\mathbb{R})$, then $f \ast \varphi_{\varepsilon}$ converge to $f$ in the $L^{1}(\mathbb{R})$ norm as $\varepsilon \rightarrow 0^{+}$.
\end{theorem}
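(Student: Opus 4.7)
The plan is to follow the standard approximation-to-the-identity argument. The starting observation is that a change of variables $y = \varepsilon z$ shows $\int_{\mathbb{R}} \varphi_{\varepsilon}(y)\,\mathrm{d}y = 1$, so I can write
\begin{equation*}
(f \ast \varphi_{\varepsilon})(x) - f(x) = \int_{\mathbb{R}} \bigl[f(x-y) - f(x)\bigr]\varphi_{\varepsilon}(y)\,\mathrm{d}y.
\end{equation*}
Taking $L^{1}$ norms in $x$ and exchanging the order of integration via Tonelli (Minkowski's inequality for integrals), I would bound
\begin{equation*}
\|f \ast \varphi_{\varepsilon} - f\|_{L^{1}} \leq \int_{\mathbb{R}} \|\tau_{y}f - f\|_{L^{1}}\,|\varphi_{\varepsilon}(y)|\,\mathrm{d}y,
\end{equation*}
where $\tau_{y}f(x) \eqdef f(x-y)$ denotes translation.

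Next I would change variables $y = \varepsilon z$ inside this bound to get
\begin{equation*}
\|f \ast \varphi_{\varepsilon} - f\|_{L^{1}} \leq \int_{\mathbb{R}} \|\tau_{\varepsilon z}f - f\|_{L^{1}}\,|\varphi(z)|\,\mathrm{d}z,
\end{equation*}
so the $\varepsilon$-dependence is isolated inside the translation. The key analytic input I would invoke is continuity of translation in $L^{1}$, namely $\|\tau_{h}f - f\|_{L^{1}} \to 0$ as $h \to 0$ for every $f \in L^{1}(\mathbb{R})$. This is itself proved by approximating $f$ in $L^{1}$ by a compactly supported continuous function (for which translation continuity is evident by uniform continuity and dominated convergence), so it is the main technical obstacle rather than the algebra above.

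Finally I would close the argument by dominated convergence applied to the $z$-integral: the integrand $\|\tau_{\varepsilon z}f - f\|_{L^{1}}\,|\varphi(z)|$ tends pointwise to $0$ as $\varepsilon \to 0^{+}$ by the translation continuity, and it is dominated by the integrable function $2\|f\|_{L^{1}}\,|\varphi(z)|$. Hence the right-hand side vanishes, giving $\|f \ast \varphi_{\varepsilon} - f\|_{L^{1}} \to 0$, as claimed. The only nontrivial ingredient is $L^{1}$-translation continuity; everything else is bookkeeping with Fubini and the scaling identity for $\varphi_{\varepsilon}$.
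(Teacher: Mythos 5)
Your argument is correct and complete: it is precisely the classical approximation-to-the-identity proof (scaling identity, Minkowski's integral inequality, continuity of translation in $L^{1}$, dominated convergence), and you correctly isolate translation continuity as the one nontrivial ingredient. The paper does not reproduce a proof of this theorem at all --- it simply cites it as a classical result from Folland's real analysis text --- and your proof is exactly that standard argument, so there is nothing to reconcile.
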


An arbitrary order of accuracy in the approximation of $S$ by (\ref{eq_S_m_k}) can be achieved locally, according to the number of
vanishing moments of (\ref{eq_dirac_delta_m_k}). We establish this result in Theorem \ref{thm_convergence_moments_S_m_k}.

\begin{theorem}\label{thm_convergence_moments_S_m_k}
Let $S \in L^{1}(\mathbb{R})$ and let $[a,b] \subset \mathbb{R}$ be a compact set. If $S \in C^{m}([a,b])$
and $S^{(m+1)}$ exists and is bounded on $(a,b)$, then $S_{\varepsilon}^{m,k}$ converges pointwise to $S$ on $(a,b)$ as $\varepsilon \rightarrow 0^{+}$. Moreover,
\begin{equation}\label{eq_error_S_m_k}
S_{\varepsilon}^{m,k}(x) - S(x) = \mathcal{O}({\varepsilon^{m+1}}) \mbox{ for } x \in (a + \varepsilon , b - \varepsilon).
\end{equation}
\end{theorem}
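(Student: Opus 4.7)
The plan is to reduce the problem to a local Taylor expansion by exploiting the compact support of $\delta^{m,k}_{\varepsilon}$ and then to kill the low-order terms using the vanishing moment conditions (\ref{mass_conservation}) and (\ref{moments}). Fix $x \in (a+\varepsilon, b-\varepsilon)$, so that the interval of integration $[x-\varepsilon,x+\varepsilon]$ in (\ref{eq_S_m_k}) lies inside $(a,b)$, where $S$ is as smooth as we need. I would start by rescaling via the change of variables $\eta = (x-\xi)/\varepsilon$, which turns (\ref{eq_S_m_k}) into
\begin{equation*}
S_{\varepsilon}^{m,k}(x) = \int_{-1}^{1} S(x - \varepsilon \eta)\, P^{m,k}(\eta)\, \mathrm{d}\eta,
\end{equation*}
thus eliminating $\varepsilon$ from the measure and isolating its role as the perturbation parameter inside the argument of $S$.

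Next, since $S \in C^{m}([a,b])$ and $S^{(m+1)}$ is bounded on $(a,b)$, I would apply Taylor's theorem with integral (or Lagrange) remainder around the point $x$ to write
\begin{equation*}
S(x - \varepsilon \eta) = \sum_{i=0}^{m} \frac{(-\varepsilon \eta)^{i}}{i!}\, S^{(i)}(x) + R_{m}(x,\eta,\varepsilon),
\end{equation*}
where $|R_{m}(x,\eta,\varepsilon)| \leq \frac{\varepsilon^{m+1}}{(m+1)!}\,\|S^{(m+1)}\|_{L^{\infty}(a,b)}\,|\eta|^{m+1}$ uniformly for $\eta \in [-1,1]$ (the fact that $x \pm \varepsilon \eta$ stays in $(a,b)$ is exactly what the restriction $x \in (a+\varepsilon, b-\varepsilon)$ buys us). Substituting this expansion into the rescaled integral and interchanging the finite sum with the integral yields
\begin{equation*}
S_{\varepsilon}^{m,k}(x) = \sum_{i=0}^{m} \frac{(-\varepsilon)^{i} S^{(i)}(x)}{i!} \int_{-1}^{1} \eta^{i} P^{m,k}(\eta)\,\mathrm{d}\eta + \int_{-1}^{1} R_{m}(x,\eta,\varepsilon)\, P^{m,k}(\eta)\,\mathrm{d}\eta.
\end{equation*}

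At this point the moment conditions do all the work: by (\ref{mass_conservation}) the $i=0$ term equals $S(x)$, and by (\ref{moments}) the terms for $i=1,\ldots,m$ vanish identically. Consequently
\begin{equation*}
\bigl|S_{\varepsilon}^{m,k}(x) - S(x)\bigr| \leq \frac{\varepsilon^{m+1}}{(m+1)!}\,\|S^{(m+1)}\|_{L^{\infty}(a,b)} \int_{-1}^{1} |\eta|^{m+1} |P^{m,k}(\eta)|\,\mathrm{d}\eta,
\end{equation*}
and the last integral is a finite constant depending only on $m$ and $k$, giving the $\mathcal{O}(\varepsilon^{m+1})$ bound. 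Pointwise convergence is then automatic. I don't anticipate any serious obstacle; the only subtlety worth stating carefully is that the condition $x \in (a+\varepsilon, b-\varepsilon)$ is precisely what guarantees the Taylor expansion is valid on the entire support of $\delta^{m,k}_{\varepsilon}(x-\,\cdot\,)$, so that no boundary contributions from outside $[a,b]$ intrude into the estimate.
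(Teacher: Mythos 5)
Your proposal is correct and follows essentially the same route as the paper's proof: Taylor expansion of $S$ about $x$ to order $m$ with Lagrange remainder, annihilation of the terms $i=1,\ldots,m$ via the moment conditions (\ref{mass_conservation}) and (\ref{moments}), and a bound on the remainder using the boundedness of $S^{(m+1)}$. The only cosmetic difference is that you rescale to $[-1,1]$ before expanding, whereas the paper expands first and rescales when evaluating the moment integrals.
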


\begin{proof}
Let $\varepsilon > 0$ be sufficiently small such that $\varepsilon < \dfrac{b-a}{2}$. Expanding $S$ in a Taylor series
on $\xi \in [a,b]$, around $x \in (a + \varepsilon, b - \varepsilon)$, we obtain
\begin{eqnarray*}
S_{\varepsilon}^{m,k}(x) & = & \sum_{j = 0}^{m} \dfrac{S^{(j)}(x)}{j!} \int\limits_{a}^{b}(\xi - x)^{i} \delta^{m,k}_{\varepsilon}(\xi - x)\,\mathrm{d}\xi \\
                         & + & \dfrac{1}{(m+1)!}\int\limits_{a}^{b} (\xi - x)^{m+1}\delta^{m,k}_{\varepsilon}(\xi - x)S^{(m+1)}(\zeta)\,\mathrm{d}\xi,
\end{eqnarray*}
for some $\zeta$ between $\xi$ and $x$. Note that since $[-\varepsilon, \varepsilon] \subset (a - x, b - x)$,
\begin{equation*}
\int\limits_{a}^{b}(\xi - x)^{i} \delta^{m,k}_{\varepsilon}(\xi - x)\,\mathrm{d}\xi = \varepsilon^{i}\int\limits_{-1}^{1} \xi^{i} P^{m,k}(\xi)\,\mathrm{d}\xi = \left\{
\begin{array}{cl}
1,                              & i = 0,          \\
0,                              & i = 1,\ldots,m,
\end{array}
\right.
\end{equation*}
and therefore
\begin{equation*}
\left\lvert S(x) - S^{m,k}_{\varepsilon}(x) \right\rvert \leq \dfrac{\kappa\varepsilon^{m+1}}{(m+1)!}\int\limits_{-1}^{1} \lvert \xi \rvert ^{m+1} \lvert P^{m,k}(\xi) \rvert\,\mathrm{d}\xi,
\end{equation*}
for some constant $\kappa \in  \mathbb{R}_{+}$. \qed
\end{proof}

In computational implementations, the use of numerical integration to evaluate (\ref{eq_S_m_k}) is required. The main challenge
is to preserve the $(m+1)$th order of accuracy established in (\ref{eq_error_S_m_k}), when $S^{m,k}_{\varepsilon}$
is replaced by an approximation $\tilde{S}^{m,k}_{\varepsilon}$ computed with a quadrature formula. According to
Theorem \ref{thm_convergence_moments_S_m_k}, the error estimation
\begin{equation*}
\left\lvert S(x) - \tilde{S}^{m,k}_{\varepsilon}(x) \right\rvert \leq \underbrace{\left\lvert S(x) - S^{m,k}_{\varepsilon}(x) \right\rvert}_{ \mathcal{O}(\varepsilon^{m+1}) \mbox{ \small{by (\ref{eq_error_S_m_k})} }} + \left\lvert S^{m,k}_{\varepsilon}(x) - \tilde{S}^{m,k}_{\varepsilon}(x) \right\rvert,
\end{equation*}
shows that the desired order of accuracy can be reached by imposing the following condition on the quadrature error:
\begin{equation}\label{eq_error_tilde_S_m_k}
S^{m,k}_{\varepsilon}(x) - \tilde{S}^{m,k}_{\varepsilon}(x) = \mathcal{O}(\varepsilon^{m+1}) \mbox{ for } x \in (a + \varepsilon , b - \varepsilon).
\end{equation}

Generally, the quality of the approximation resulting from quadrature rules depends on
continuity, the number of continuous derivatives and their magnitude of the integrand. Observe that
\begin{equation*}
\dfrac{\partial^{i}\left(S(\xi)\delta^{m,k}_{\varepsilon}(x-\xi)\right)}{\partial \xi^{i}} = \dfrac{1}{\varepsilon^{i+1}} \dfrac{\partial^{i}\left(S(\xi)P^{m,k}\left(\dfrac{x-\xi}{\varepsilon}\right)\right)}{\partial\xi^{i}},\;
\end{equation*}
for any integer $i \geq 0$ and $\xi \in (x-\varepsilon,x+\varepsilon)$. Thus, an arbitrarily small choice of the scaling parameter $\varepsilon$ might affect the validity of (\ref{eq_error_tilde_S_m_k}).

In Proposition \ref{prop_smoothness_optimal_epsi}, we will introduce an optimal scaling parameter that preserves the $(m+1)$th order of accuracy in (\ref{eq_error_tilde_S_m_k}).
Without loss of generality, suppose that $\tilde{S}^{m,k}_{\varepsilon}$ is computed using the composite Newton-Cotes formulas (closed) \cite{rabinowitz} on $N_{p}$ subintervals
$\left\{\bigl[\xi_{i},\xi_{i+1}\bigr]\right\}_{i = 0}^{N_{p}-1}$ given by the points
\begin{equation*}
\xi_{0} = a < \xi_{1} < \cdots < \xi_{N_{p}} = b,
\end{equation*}
i.e.,
\begin{eqnarray}
S^{m,k}_{\varepsilon}(x)  =  \sum_{i = 0}^{N_{p} - 1} \int\limits_{\xi_{i}}^{\xi_{i+1}} S(\xi) \delta^{m,k}_{\varepsilon}(x - \xi)\,\mathrm{d}\xi   & = &  \underbrace{\sum_{i = 0}^{N_{p} - 1} \left( \sum_{j = 0}^{q} \alpha_{ij}S(\xi_{i} + j h_{i}) \delta^{m,k}_{\varepsilon}(x - \xi_{i} - j h_{i}) \right)}_{\tilde{S}^{m,k}_{\varepsilon}(x)}  \nonumber\\
                                                                                                                                                   & + & \kappa \sum_{i = 0}^{N_{p} - 1} h_{i}^{q+2} \dfrac{\partial^{q+1}\left(S(\xi)\delta^{m,k}_{\varepsilon}(x-\xi)\right)}{\partial\xi^{q+1}} \bigg|_{\xi = \zeta_{i}},                \label{eq_newton_cotes_tilde_S_m_k}
\end{eqnarray}
where $q$ is a nonnegative integer denoting the degree of exactness of the quadrature rule, $\alpha_{ij}$ are the quadrature weights, $h_{i} = \left(\xi_{i+1} - \xi_{i}\right)/q$, $\kappa$ is a constant independent
of the integrand and $h_{i}$, and $\zeta_{i} \in (\xi_{i},\xi_{i+1})$.

\begin{proposition}\label{prop_smoothness_optimal_epsi}
Under the hypotheses in Theorem \ref{thm_convergence_moments_S_m_k}, suppose that $m,k\geq 2$ and let $q \leq \min\{m,k\} - 1$
be the degree of exactness in the Newton-Cotes quadrature rule. Then,
\begin{equation*}
S(x) - \tilde{S}^{m,k}_{\varepsilon}(x) = \mathcal{O}(\varepsilon^{m+1}) \mbox{ for } x \in (a + \varepsilon , b - \varepsilon),
\end{equation*}
provided that the optimal scaling parameter $\displaystyle \varepsilon = \mathcal{O}\left(\left( \sum_{i=0}^{N_{p}-1} h_{i}^{q+2} \right)^{1/(m+q+3)}\right)$.
\end{proposition}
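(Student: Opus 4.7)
My plan is to split
\begin{equation*}
\left\lvert S(x) - \tilde{S}^{m,k}_{\varepsilon}(x) \right\rvert \leq \left\lvert S(x) - S^{m,k}_{\varepsilon}(x) \right\rvert + \left\lvert S^{m,k}_{\varepsilon}(x) - \tilde{S}^{m,k}_{\varepsilon}(x) \right\rvert,
\end{equation*}
control the first term by Theorem \ref{thm_convergence_moments_S_m_k} (which already gives $\mathcal{O}(\varepsilon^{m+1})$), derive a sharp bound on the quadrature term in terms of both $\varepsilon$ and the $h_{i}$, and finally choose $\varepsilon$ so as to balance the two contributions. The only substantive step is the quadrature bound; everything else is algebra.

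First I would invoke the Newton--Cotes remainder already displayed in (\ref{eq_newton_cotes_tilde_S_m_k}) and expand the $(q+1)$st $\xi$-derivative of the integrand by Leibniz together with the chain rule, using that each $\xi$-differentiation of $\delta^{m,k}_{\varepsilon}(x-\xi) = \varepsilon^{-1} P^{m,k}((x-\xi)/\varepsilon)$ produces a chain-rule factor $-1/\varepsilon$:
\begin{equation*}
\dfrac{\partial^{q+1}}{\partial\xi^{q+1}}\bigl[S(\xi)\delta^{m,k}_{\varepsilon}(x-\xi)\bigr] = \sum_{j=0}^{q+1} \binom{q+1}{j} S^{(j)}(\xi)\, \dfrac{(-1)^{q+1-j}}{\varepsilon^{q+2-j}}\, \bigl(P^{m,k}\bigr)^{(q+1-j)}\!\left(\dfrac{x-\xi}{\varepsilon}\right).
\end{equation*}
The hypothesis $q \leq m-1$ makes every $S^{(j)}$ with $j \leq q+1 \leq m$ continuous on $[a,b]$, hence uniformly bounded, and the hypothesis $q \leq k-1$ together with the smoothness condition (\ref{smoothness}) makes every $\bigl(P^{m,k}\bigr)^{(q+1-j)}$ with $q+1-j \leq k$ continuously extendable by zero outside $[-1,1]$, hence again uniformly bounded. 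The dominant negative power of $\varepsilon$ is $\varepsilon^{-(q+2)}$ (attained at $j = 0$), so inserting back into (\ref{eq_newton_cotes_tilde_S_m_k}) yields
\begin{equation*}
\left\lvert S^{m,k}_{\varepsilon}(x) - \tilde{S}^{m,k}_{\varepsilon}(x) \right\rvert \leq \dfrac{C}{\varepsilon^{q+2}} \sum_{i=0}^{N_{p}-1} h_{i}^{q+2}
\end{equation*}
for a constant $C > 0$ independent of $\varepsilon$ and of the $h_{i}$.

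Combining the two error contributions, the total bound is $\mathcal{O}(\varepsilon^{m+1}) + \mathcal{O}\bigl(\varepsilon^{-(q+2)}\sum h_{i}^{q+2}\bigr)$, and equating the two pieces gives $\varepsilon^{m+q+3} \sim \sum_{i=0}^{N_{p}-1} h_{i}^{q+2}$, which is precisely the advertised optimal scaling; with this choice both contributions become $\mathcal{O}(\varepsilon^{m+1})$, proving the proposition. The main technical obstacle is the bookkeeping of the Leibniz expansion together with the observation that the $(q+1)$st derivative of $\delta^{m,k}_{\varepsilon}$ must be continuous on the whole real line for the classical Newton--Cotes remainder to apply; this is exactly where the requirement $q \leq k-1$ is indispensable, since without the vanishing of $\bigl(P^{m,k}\bigr)^{(i)}$ at $\pm 1$ for $i \leq q+1$ the derivative would jump across the boundary of the support and the quadrature bound used above would fail.
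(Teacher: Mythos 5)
Your proposal is correct and follows essentially the same route as the paper: the splitting into regularization error plus quadrature error, the Newton--Cotes remainder from (\ref{eq_newton_cotes_tilde_S_m_k}), the Leibniz bound $\mathcal{O}(\varepsilon^{-(q+2)})$ on the $(q+1)$st derivative of the integrand, and the balancing $\varepsilon^{m+q+3} \sim \sum_i h_i^{q+2}$. The only difference is that you carry out the Leibniz expansion explicitly and identify where the hypotheses $q \leq m-1$ and $q \leq k-1$ enter, whereas the paper delegates that computation to the cited reference \cite{jp_siam_1}.
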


\begin{proof}
From (\ref{eq_newton_cotes_tilde_S_m_k}), it follows that
\begin{equation*}
S^{m,k}_{\varepsilon}(x) - \tilde{S}^{m,k}_{\varepsilon}(x) = \kappa\sum_{i=0}^{N_{p}-1} h_{i}^{q+2}\dfrac{\partial^{q+1}\left(S(\xi)\delta^{m,k}_{\varepsilon}(x-\xi)\right)}{\partial\xi^{q+1}} \bigg|_{\xi = \zeta_{i}}.
\end{equation*}
Using the Leibnitz rule, it can be proven that \cite{jp_siam_1}
\begin{equation*}
\dfrac{\partial^{q+1}\left(S(\xi)\delta^{m,k}_{\varepsilon}(x-\xi)\right)}{\partial\xi^{q+1}} \bigg|_{\xi = \zeta_{i}} = \mathcal{O}\left(\dfrac{1}{\varepsilon^{q+2}}\right) \mbox{ for } i = 0,\ldots,N_{p}-1;
\end{equation*}
and hence, (\ref{eq_error_tilde_S_m_k}) leads to the proposed optimal
scaling parameter. \qed
\end{proof}

\section{Numerical results and discussion}
In this section, we assess the accuracy of the spectral Chebyshev collocation method with Gauss--Lobatto nodes \cite{hesthave_sigal_david} in
the solution of a singular advection and inviscid Burgers equation, using  the high-order regularization technique with optimal scaling presented
in Section \ref{section_regularization}.

\subsection{Singular advection equation}\label{section_singular_advection_equation}
Consider the following first order advection PDE on the domain $(x,t) \in [-1,1] \times [0,2]$:
\begin{equation}\label{eq_original_advection}
 \left\{
 \begin{array}{rcl}
   \dfrac{\partial u}{\partial t} + \dfrac{\partial u}{\partial x} & = &  \underbrace{3\cos(5 \pi x)\left( H\left( x + \dfrac{3}{10}\right) - H\left( x - \dfrac{3}{10} \right) \right)}_{S(x)}  , \\
   u(x,0)                                                          & = &  \sin(\pi x),  \\
   u(-1,t)                                                         & = &  \sin(\pi(-1-t)),
 \end{array}
 \right.
 \end{equation}
where $H$ denotes the Heaviside function
\begin{equation*}\label{eq_heaviside_function}
H(x) = \left\{\begin{array}{cl}
         0, & x<0, \\
         \dfrac{1}{2}, & x=0, \\
         1, & x>0,
       \end{array}\right.
\end{equation*}
and $S \in C^{0}([-1,1])$ denotes the singular source term, whose first order derivative is discontinuous at $x = \pm \dfrac{3}{10}$. The
analytical solution to (\ref{eq_original_advection}) is the $C^{0}([-1,1])$ function
\begin{equation}\label{eq_exact_solution_original_advection}
u(x,t) = \sin\left( \pi (x - t) \right) + \int\limits_{x-t}^{x} S(\xi)\,\mathrm{d}\xi,
\end{equation}
which has discontinuous first order derivatives at $x = \pm \dfrac{3}{10} + t$.



In (\ref{eq_original_advection}), the source term leads to first order singularities
in the analytical solution (\ref{eq_exact_solution_original_advection}). When approximating (\ref{eq_original_advection})
with a spectral collocation method combined with a third order TVD Runge--Kutta scheme \cite{Shu},
spectral accuracy is prevented by Gibbs oscillations, as illustrated in Figure \ref{fig_U_spectral_multiple_particles_advection_S}, where
$u^{N}(x,2)$ denotes the spectral solution at $t = 2$ using $N+1$ Gauss--Lobatto nodes with the Courant--Friedrich--Lewy condition $\Delta t = 0.5/N^{2}$.

\begin{figure}[h]
\centering
\mbox{ \subfigure{ \includegraphics[width=5.80cm]{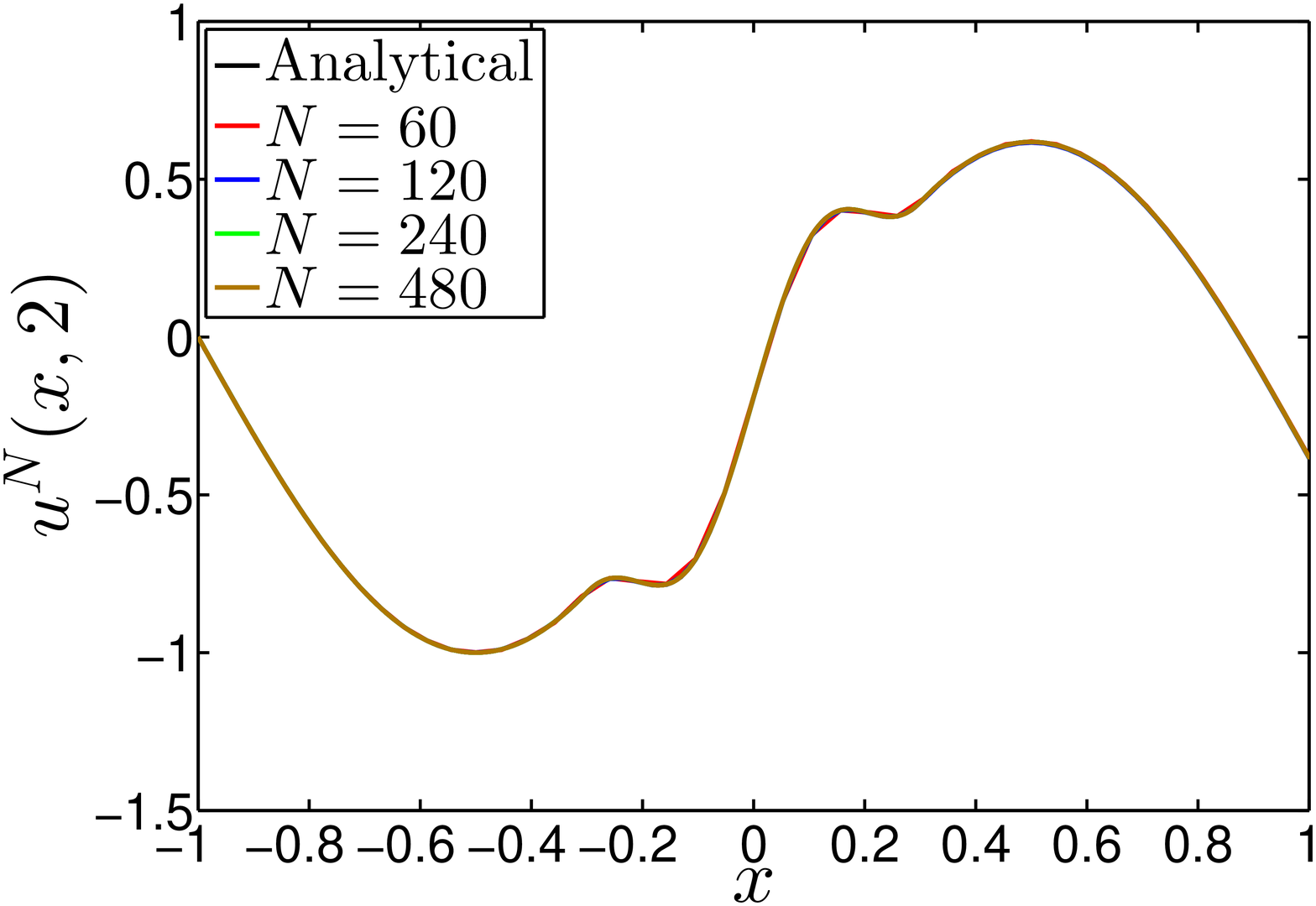} } \quad \subfigure{ \includegraphics[width=5.80cm]{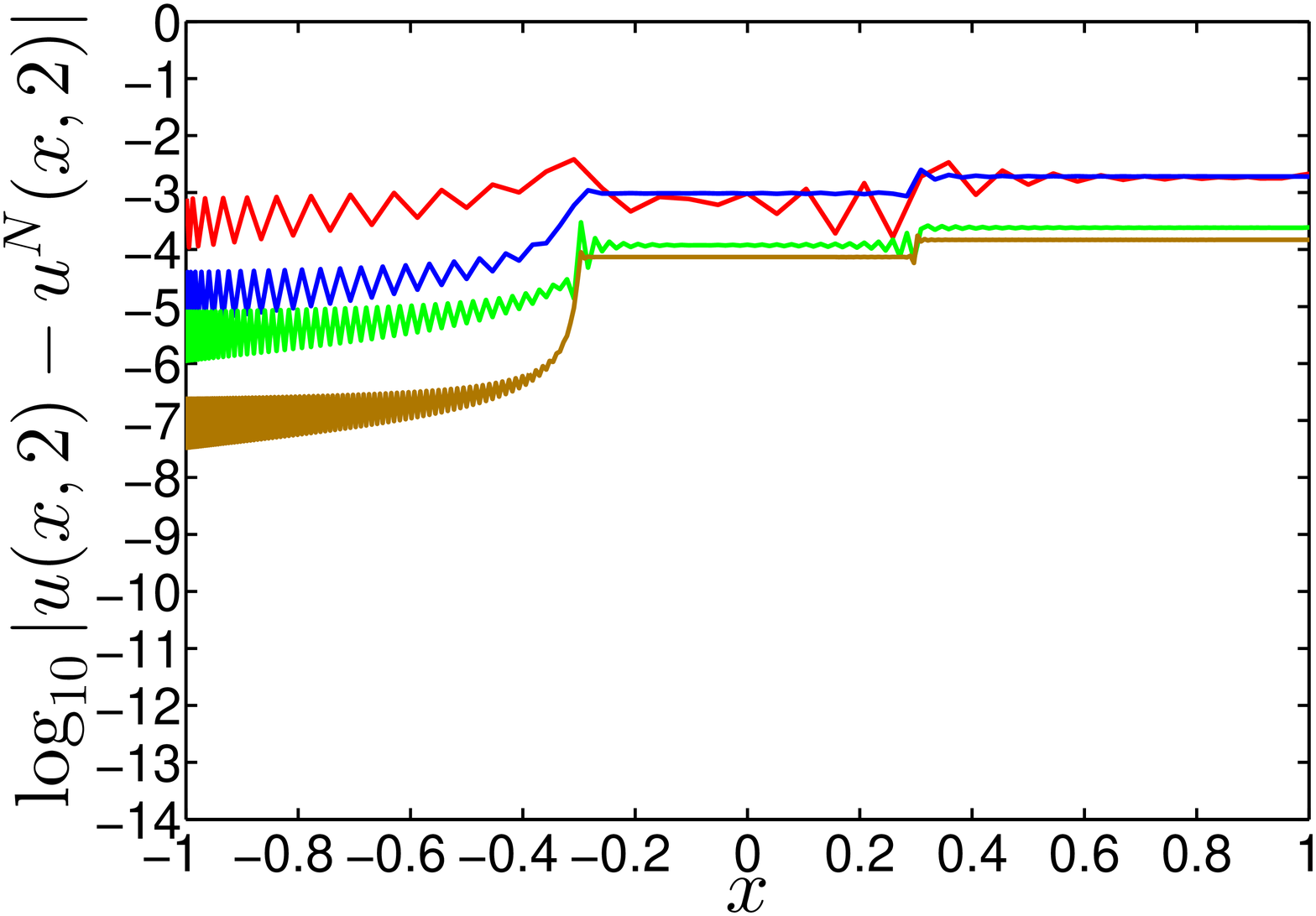} } }
\caption{Analytical solution, spectral solution (left) and pointwise error (right) to the advection equation at $t = 2$, without regularization of the singular source term.}
\label{fig_U_spectral_multiple_particles_advection_S}
\end{figure}

To suppress the Gibbs phenomenon, we shall henceforth approximate the singular source term $S$ by the smooth function
$\tilde{S}^{m,k}_{\varepsilon} \in C^{k}([-1,1])$ in (\ref{eq_newton_cotes_tilde_S_m_k}), using the composite
Simpson quadrature rule on $N_{p} + 1$ nonuniform points given by
\begin{equation*}
\xi_{i} = \dfrac{3}{10}\sin\left(\pi \left(-\dfrac{1}{2}+\dfrac{i}{N_{p}}\right)\right) \mbox{ for } i = 0,\ldots,N_{p},
\end{equation*}
and the optimal scaling parameter $\varepsilon$ in Proposition \ref{prop_smoothness_optimal_epsi}. Without loss of generality,
we consider $N_{p} = 2001$ (see Figure \ref{fig_source_U_spectral_multiple_particles_advection_S_7_4}).
The same accuracy can be obtained by using a more accurate quadrature rule with fewer points, for instance,
a five points Newton--Cotes quadrature rule with $N_{p} = 301$.

We assess accuracy within three separate regions of the spatial domain $\Omega \eqdef [-1,1]$, i.e.,
\begin{eqnarray}
\mathcal{P}_{\varepsilon} & \eqdef & \left( \xi_{0} + \varepsilon, \xi_{N_{p}} - \varepsilon \right),                                                                   \label{eq_zone_P} \\
\mathcal{R}_{\varepsilon} & \eqdef & \left[ \xi_{0} - \varepsilon , \xi_{0} + \varepsilon \right] \cup \left[ \xi_{N_{p}} - \varepsilon, \xi_{N_{p}} + \varepsilon \right],     \nonumber \\
\mathcal{Q}_{\varepsilon} & \eqdef & \Omega - \left( \mathcal{P}_{\varepsilon} \cup \mathcal{R}_{\varepsilon} \right) \label{eq_zone_Q}.
\end{eqnarray}
Specifically, $\mathcal{P}_{\varepsilon}$ contains the particles $\left\{\xi\right\}_{i=0}^{N_{p}}$. On $\mathcal{R}_{\varepsilon}$, the
singularities in $S$ are removed by using a high-order polynomial.  Within $\mathcal{Q}_{\varepsilon}$, the compactness of the support
in (\ref{eq_dirac_delta_m_k}) implies that $S$ and $\tilde{S}^{m,k}_{\varepsilon}$ are equal.

Figure \ref{fig_source_U_spectral_multiple_particles_advection_S_7_4} shows the singular and regularized source terms (left), as well as the respective pointwise error (right),
for the case of $m = 7$, $k = 4$ and optimal scaling $\varepsilon = 6.6 \times 10^{-2}$. By definition, the regularization does not introduce errors
on $\mathcal{Q}_{\varepsilon}$. On $\mathcal{R}_{\varepsilon}$, the regularization error is affected by the smoothing that removes the first order singularity in $S$, and therefore high-order
convergence is not expected. The number of vanishing moments $m$ governs the behavior of the  regularization error on $\mathcal{P}_{\varepsilon}$
and $\mathcal{O}\left(\varepsilon^{m+1}\right)$ convergence can be achieved for $\varepsilon$ sufficiently small
($\varepsilon < \frac{3}{10}$, according to the proof of Theorem \ref{thm_convergence_moments_S_m_k}).

\begin{figure}[h]
\centering
\mbox{ \subfigure{ \includegraphics[width=5.80cm]{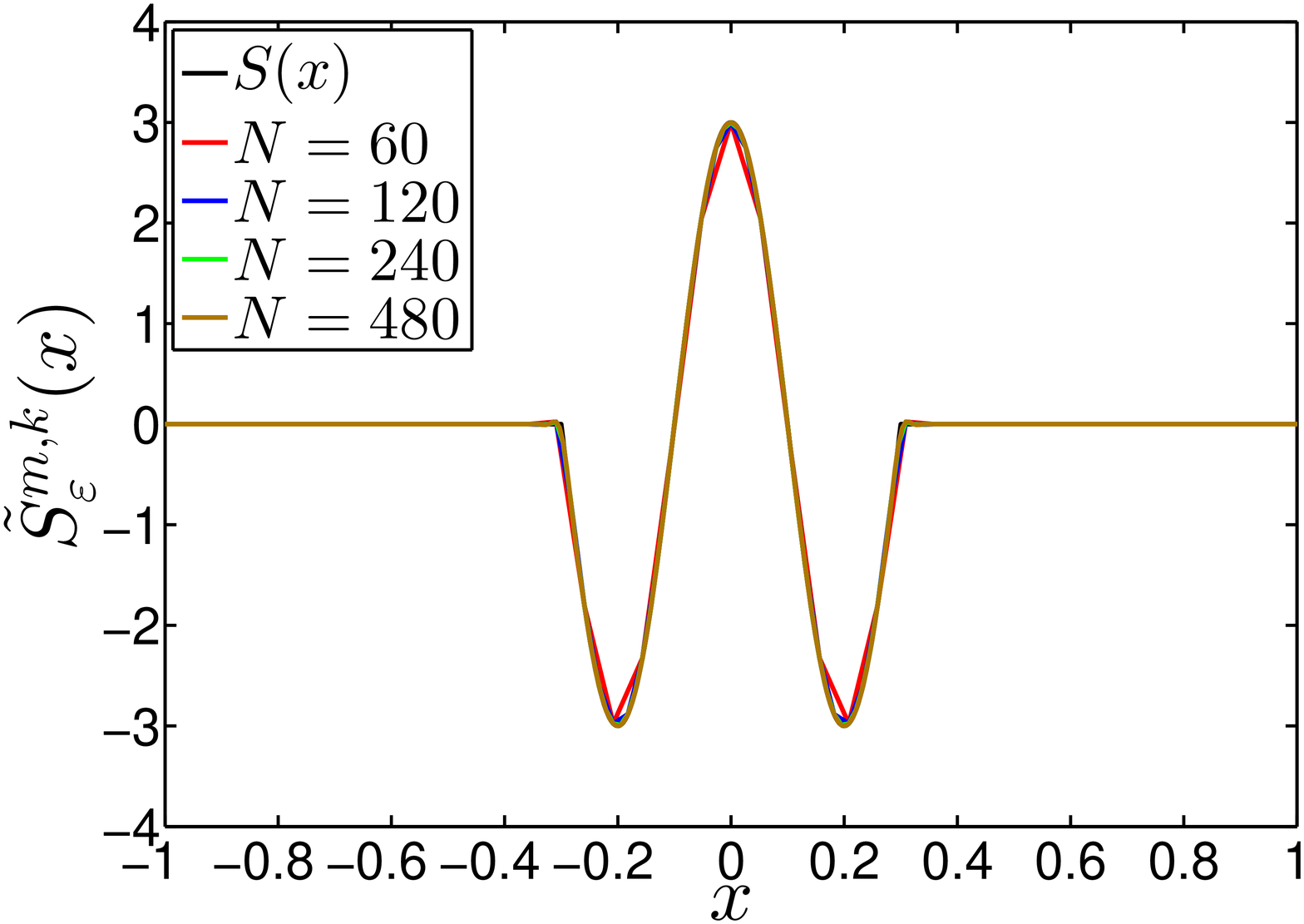} } \quad \subfigure{ \includegraphics[width=5.80cm]{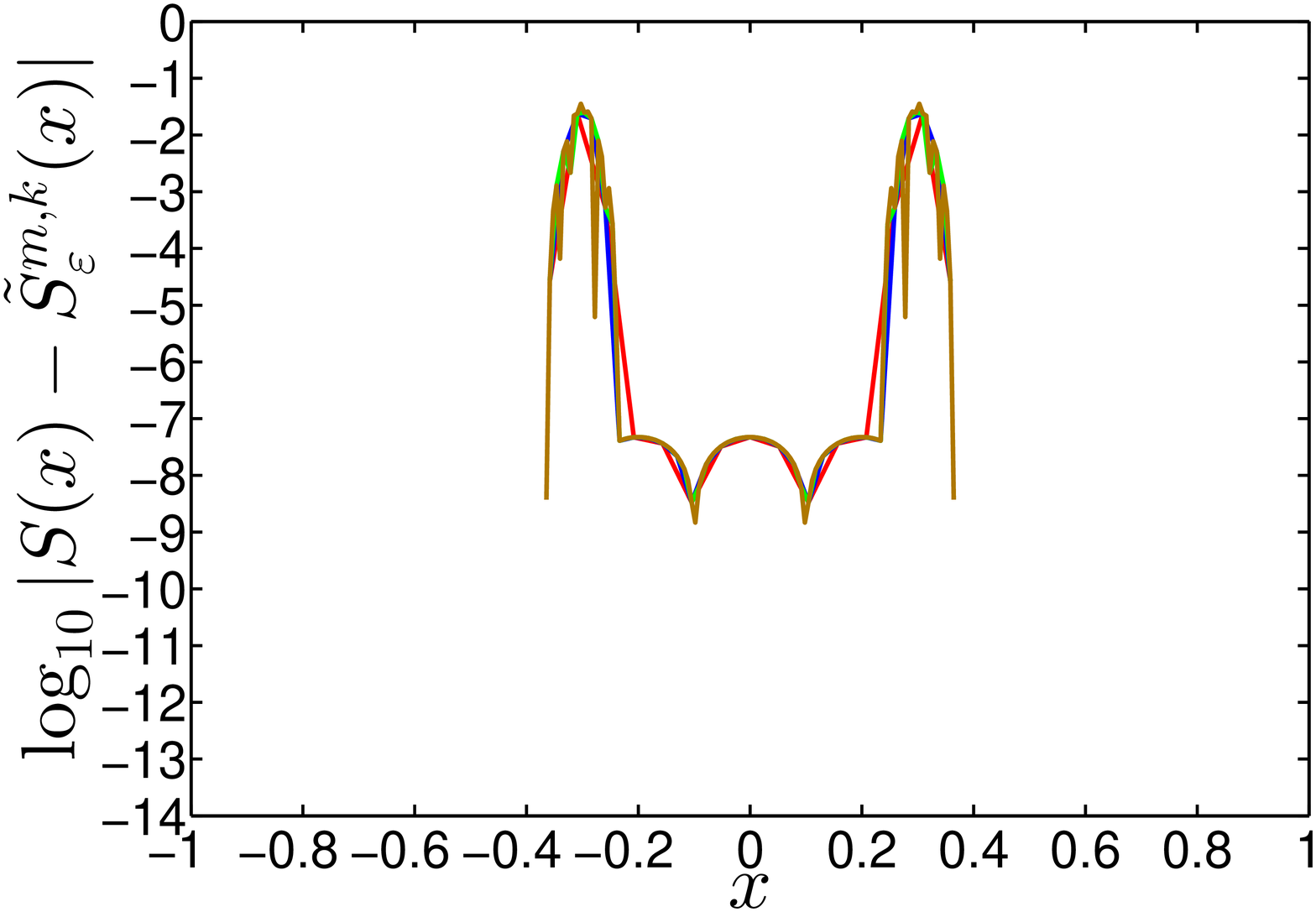} } }
\caption{Singular source, regularized source for $m = 7$, $k=4$ and optimal scaling $\varepsilon = 6.6 \times 10^{-2}$ (left), and pointwise error (right) on $N+1$ spectral points.}
\label{fig_source_U_spectral_multiple_particles_advection_S_7_4}
\end{figure}

Let us now study the accuracy of the spectral method applied to (\ref{eq_original_advection}) when $S$
is replaced by the regularization $\tilde{S}^{m,k}_{\varepsilon}$ shown in Figure \ref{fig_source_U_spectral_multiple_particles_advection_S_7_4} (left), compared with the analytical solution (\ref{eq_exact_solution_original_advection}).
The spectral solution computed with $N+1$ Gauss--Lobatto nodes using the regularized source term will be denoted by $u^{N}_{\varepsilon}(x,t)$.
Since $\tilde{S}^{m,k}_{\varepsilon}\in C^{k}([-1,1])$, we expect to have a convergence of $\mathcal{O}\left(N^{-(k+1)}\right)$ on $\mathcal{P}_{\varepsilon} \cup \mathcal{Q}_{\varepsilon}$
with the regularization of the singular source term. On $\mathcal{R}_{\varepsilon}$, the regularization error clearly will cause
a loss of accuracy.

In Figure \ref{fig_U_spectral_multiple_particles_advection_S_7_4}, the analytical and spectral solution (left) and the respective pointwise error (right) at $t = 2$ are
shown. The regularization improves the accuracy of the spectral solution on $\mathcal{P}_{\varepsilon} \cup \mathcal{Q}_{\varepsilon}$  with grid refinement, as opposed
to Figure \ref{fig_U_spectral_multiple_particles_advection_S}, where the spectral solution was computed with the singular source term. Figure \ref{fig_conv_error_U_spectral_multiple_particles_advection_S_7_4} shows the convergence order on $\mathcal{P}_{\varepsilon}$ (left) and $\mathcal{Q}_{\varepsilon}$
(right), which has been estimated using linear regression to fit the respective $L^{2}_{w}(-1,1)$ error ($w(x) = (1-x^{2})^{-1/2}$). The convergence
is faster than $\mathcal{O}\left(N^{-(k+1)}\right)$ (superconvergence), i.e., $\mathcal{O}\left(N^{-6.45}\right)$ and $\mathcal{O}\left(N^{-7.71}\right)$  on $\mathcal{P}_{\varepsilon}$ and $\mathcal{Q}_{\varepsilon}$,
respectively.

\begin{figure}[h]
\centering
\mbox{ \subfigure{ \includegraphics[width=5.80cm]{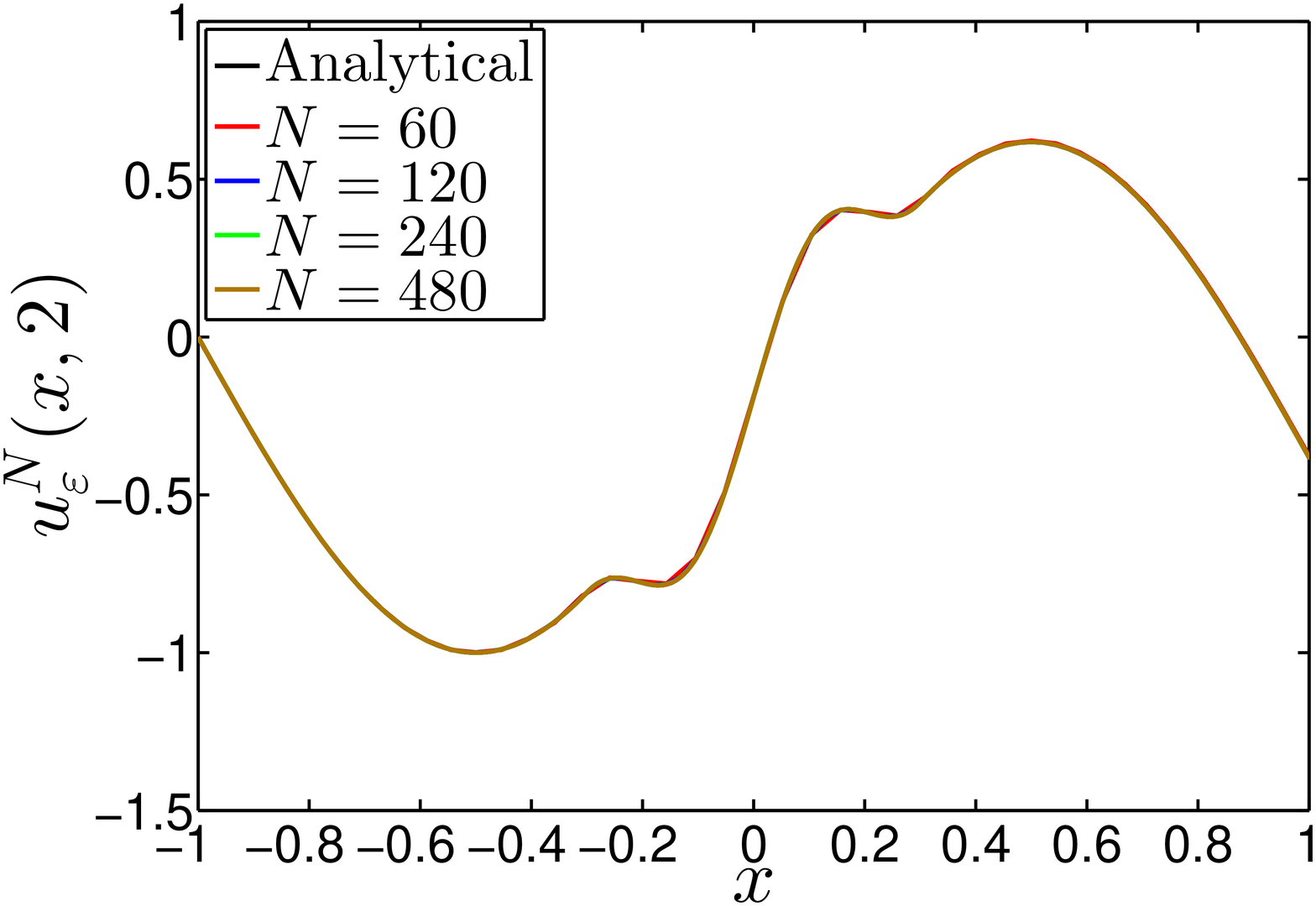} } \quad \subfigure{ \includegraphics[width=5.80cm]{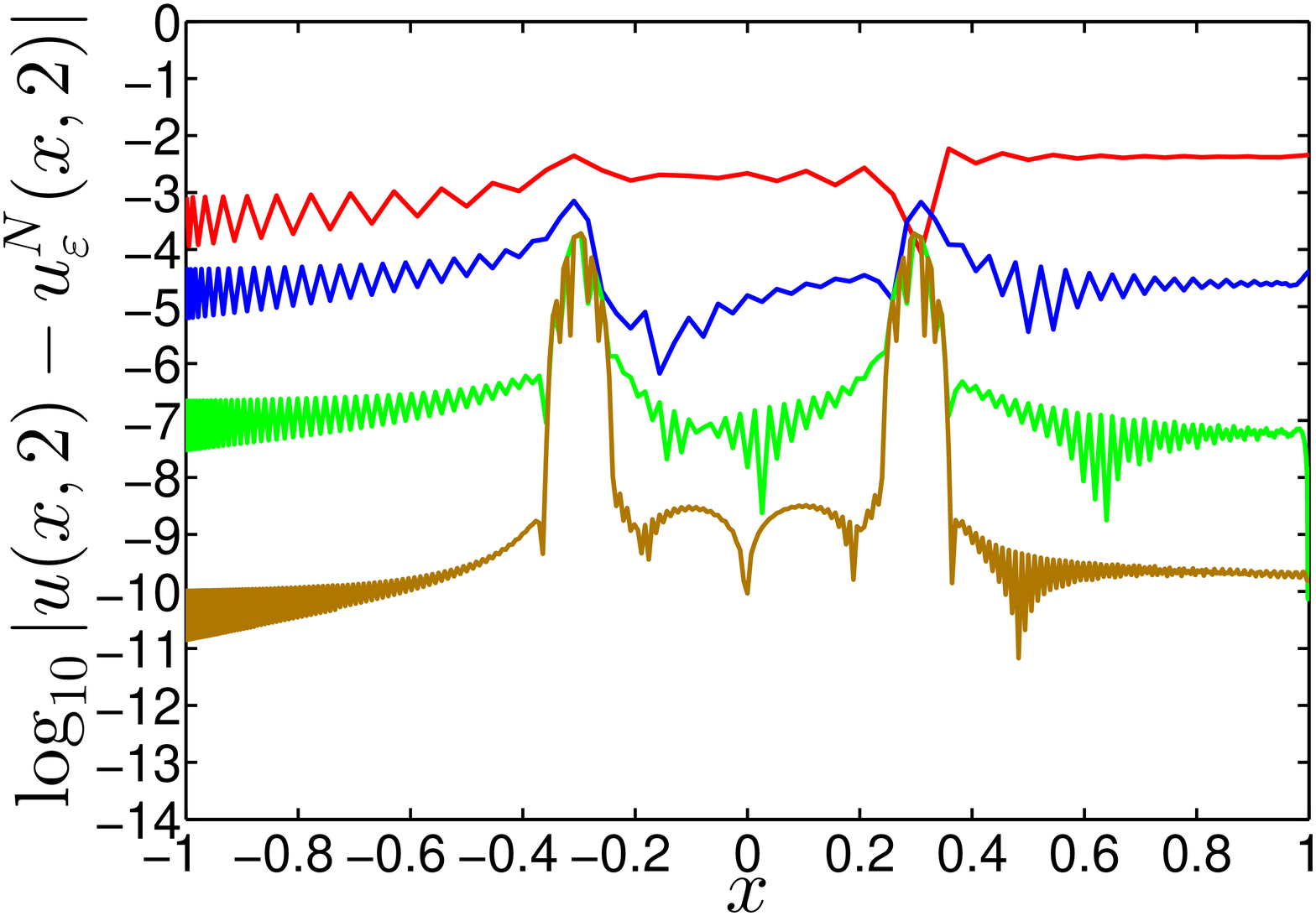} } }
\caption{Analytical solution, spectral solution (left) and pointwise error (right) to the advection equation at $t = 2$, using the regularization $S^{m,k}_{\varepsilon}$.}
\label{fig_U_spectral_multiple_particles_advection_S_7_4}
\end{figure}

\begin{figure}[h]
\centering
\mbox{ \subfigure{ \includegraphics[width=5.80cm]{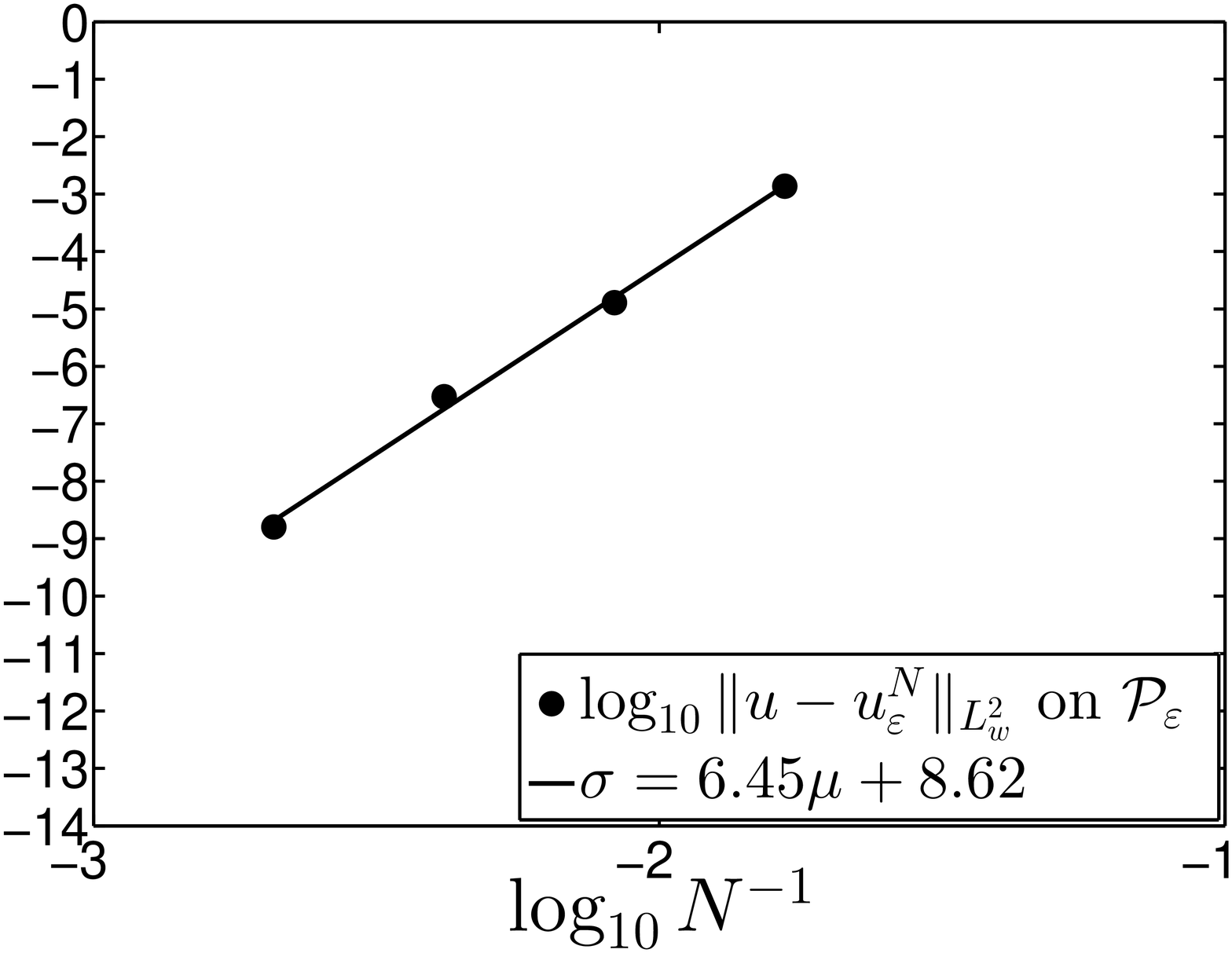} } \quad \subfigure{ \includegraphics[width=5.80cm]{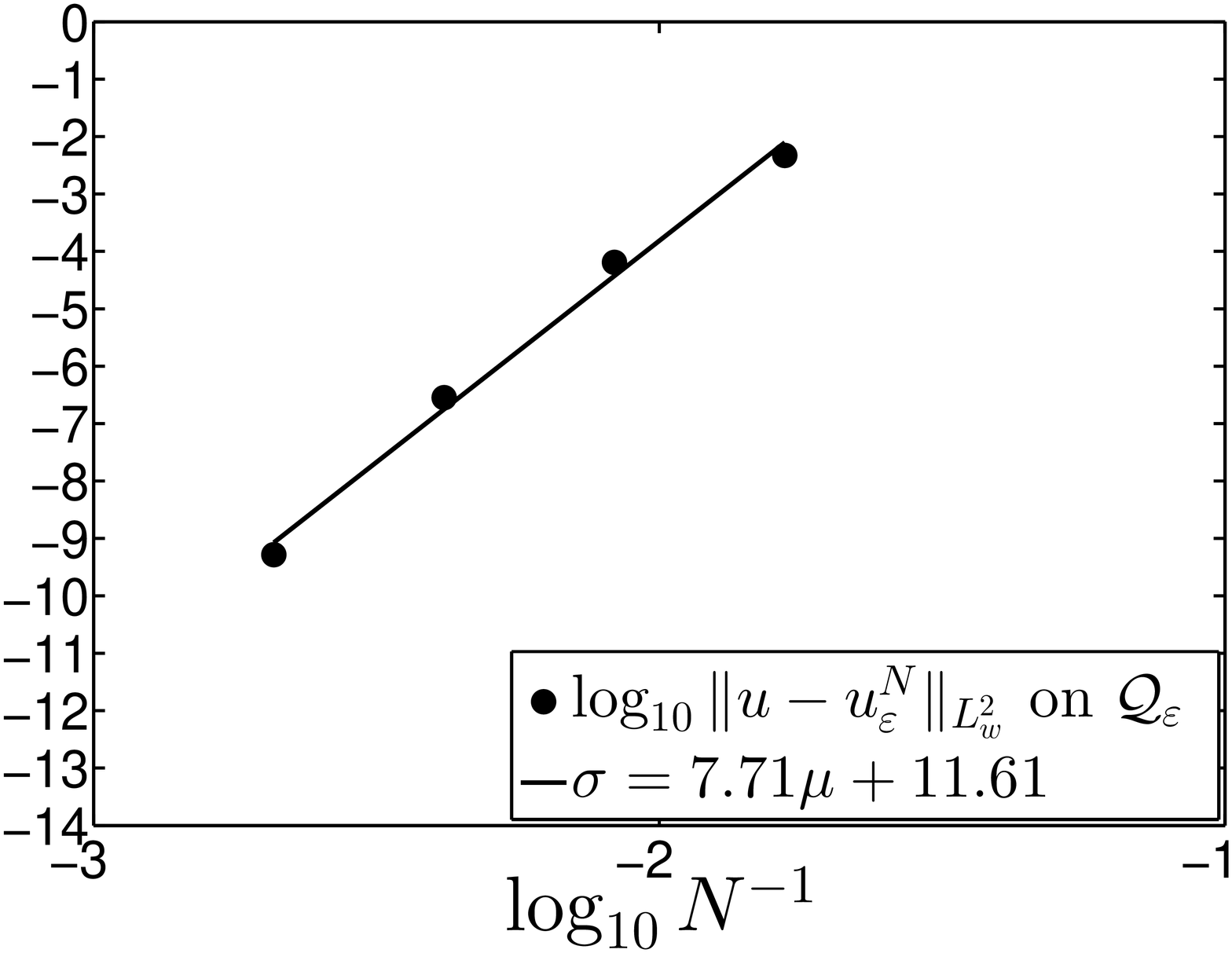} } }
\caption{Error in $\|\cdot\|_{L^{2}_{w}}$ norm and linear regression to estimate convergence order on $\mathcal{P}_{\varepsilon}$ (left) and $\mathcal{Q}_{\varepsilon}$ (right) to the advection equation at $t = 2$, using the regularization $S^{m,k}_{\varepsilon}$.}
\label{fig_conv_error_U_spectral_multiple_particles_advection_S_7_4}
\end{figure}

Fixing $k$, the accuracy of the spectral solution is controlled by $m$ and the optimal scaling $\varepsilon$. For $k = 4$,
the expected $\mathcal{O}\left(N^{-(k+1)}\right)$ convergence on $\mathcal{P}_{\varepsilon} \cup \mathcal{Q}_{\varepsilon}$
requires at least $m = 7$ vanishing moments. By increasing $m$, a more accurate representation of the singular source on $\mathcal{P}_{\varepsilon}$,
through the regularization,  is obtained. However, the optimal scaling $\varepsilon$ increases as long as
$m$ does it, so that an arbitrary large choice of $m$ can lead to a violation of the condition $\varepsilon < \frac{3}{10}$, i.e, an increase in the size of the
regularization zone $\mathcal{R}_{\varepsilon}$ where high-order of accuracy will not be reached. In terms of accuracy of the spectral solution,
superconvergence is observed for $m \geq 7$. Results for $m = 1,5,9,13,17$ and $k = 4$ are summarized in Table \ref{table_m_k_epsi_order_accuracy_advection}.

\begin{table}[!h]
\begin{center}
\caption{Optimal scaling, pointwise error in the regularization of the source term on $\mathcal{P}_{\varepsilon}$ and convergence order of the spectral solution to the advection equation at $t = 2$ on $\mathcal{P}_{\varepsilon} \cup \mathcal{Q}_{\varepsilon}$, when S is replaced by $\tilde{S}^{m,k}$.}
\label{table_m_k_epsi_order_accuracy_advection}
\def\arraystretch{2.0}
\footnotesize{
\begin{tabular}{|l|l|l|l|l|}
\hline
\multicolumn{5}{|c|}{$k = 4$} \\
\hline
\multicolumn{1}{|c|}{$m$} & \multicolumn{1}{c|}{$\varepsilon$} & \multicolumn{1}{c|}{$\log_{10} \lvert S(x) - \tilde{S}^{m,k}(x) \rvert$ on $\mathcal{P}_{\varepsilon}$} & \multicolumn{1}{c|}{Conv. order on $\mathcal{P}_{\varepsilon}$} & \multicolumn{1}{c|}{Conv. order on $\mathcal{Q}_{\varepsilon}$} \\
\hline
\hline
\multicolumn{1}{|c|}{1} & \multicolumn{1}{c|}{$6.5 \times 10^{-3}$} & \multicolumn{1}{c|}{$\mathcal{O}\left( 10^{-4} \right)$} & \multicolumn{1}{c|}{1.56} & \multicolumn{1}{c|}{2.36} \\
\hline
\multicolumn{1}{|c|}{5} & \multicolumn{1}{c|}{$4.0 \times 10^{-2}$} & \multicolumn{1}{c|}{$\mathcal{O}\left( 10^{-7} \right)$} & \multicolumn{1}{c|}{5.42} & \multicolumn{1}{c|}{6.66} \\
\hline
\multicolumn{1}{|c|}{9} & \multicolumn{1}{c|}{$9.5 \times 10^{-2}$} & \multicolumn{1}{c|}{$\mathcal{O}\left( 10^{-9} \right)$} & \multicolumn{1}{c|}{7.48} & \multicolumn{1}{c|}{8.00} \\
\hline
\multicolumn{1}{|c|}{13} & \multicolumn{1}{c|}{$1.5 \times 10^{-1}$} & \multicolumn{1}{c|}{$\mathcal{O}\left( 10^{-11} \right)$} & \multicolumn{1}{c|}{7.32} & \multicolumn{1}{c|}{8.21} \\
\hline
\multicolumn{1}{|c|}{17} & \multicolumn{1}{c|}{$2.1 \times 10^{-1}$} & \multicolumn{1}{c|}{$\mathcal{O}\left( 10^{-14} \right)$} & \multicolumn{1}{c|}{8.15} & \multicolumn{1}{c|}{8.60} \\
\hline
\end{tabular}
}
\end{center}
\end{table}

\subsection{Singular Burgers equation}\label{section_singular_burgers_equation}
We now consider the inviscid Burgers equation on the domain $(x,t) \in [0,2] \times [0,2]$
\begin{equation}\label{eq_original_burgers}
\setlength{\extrarowheight}{3pt}
 \left\{
 \begin{array}{rcl}
   \dfrac{\partial u}{\partial t} + \dfrac{\partial }{\partial x}\left(\dfrac{u^{2}}{2}\right) & = &  S(x - 1), \\
   u(x,0)                                                                                      & = &  x,  \\
   u(0,t)                                                                                      & = &  0.
 \end{array}
 \right.
\end{equation}
where $S$ is the singular source term in (\ref{eq_original_advection}). The smooth initial data
was chosen such that the corresponding homogeneous problem does not develop shock discontinuities caused by
the nonlinearity of the flux function.

In analogy with what was done in Section \ref{section_singular_advection_equation}, we shall evaluate the accuracy of the spectral
Chebyshev collocation method with $N+1$ Gauss--Lobatto nodes in the solution of (\ref{eq_original_burgers}) on the regions of the
spatial domain $\Omega \eqdef [0,2]$ defined in (\ref{eq_zone_P}) and (\ref{eq_zone_Q}), when $S$ is regularized by
$S^{m,k}_{\varepsilon}$, using the composite Simpson quadrature rule and $N_{p}+1 = 2000$ nonuniform points
\begin{equation*}
\xi_{i} = 1 + \dfrac{3}{10}\sin\left(\pi \left(-\dfrac{1}{2}+\dfrac{i}{N_{p}}\right)\right) \mbox{ for } i = 0,\ldots,N_{p}.
\end{equation*}
We compute the spectral solution at $t = 2$ with a 12th order exponential filter for stabilization, taking $N = 100,200,300,400$.
Due to the absence of analytical solution, the respective error is estimated by comparison with a spectral solution $\tilde{u}_{\varepsilon}(x,2)$
on a finer grid ($N = 500$).

The spectral solution (left) and pointwise error (right) for $m = 13$, $k = 4$ and optimal scaling $\varepsilon = 1.5 \times 10^{-1}$,
are shown in Figure \ref{fig_U_spectral_multiple_particles_burgers_S_13_4}. The spectral solution exhibits an oscillatory behavior around $x = \frac{7}{10}$ (where the first
derivative of $S$ is discontinuous), which decays as $N$ increases. The convergence on $\mathcal{P}_{\varepsilon}$ and
$\mathcal{Q}_{\varepsilon}$ is approximately $\mathcal{O}\left(N^{-5.37}\right)$ and $\mathcal{O}\left(N^{-5.61}\right)$,
respectively. The expected $\mathcal{O}\left(N^{-(k+1)}\right)$ requires at least $m = 13$ vanishing moments. Table \ref{table_m_k_epsi_order_accuracy_burgers} summarizes results
for $m = 5,9,13,17$ and $k = 4$. In contrast to the results for the advection problem in Section \ref{section_singular_advection_equation},
superconvergence as $m$ increases is not observed.

\begin{figure}[h]
\centering
\mbox{ \subfigure{ \includegraphics[width=5.80cm]{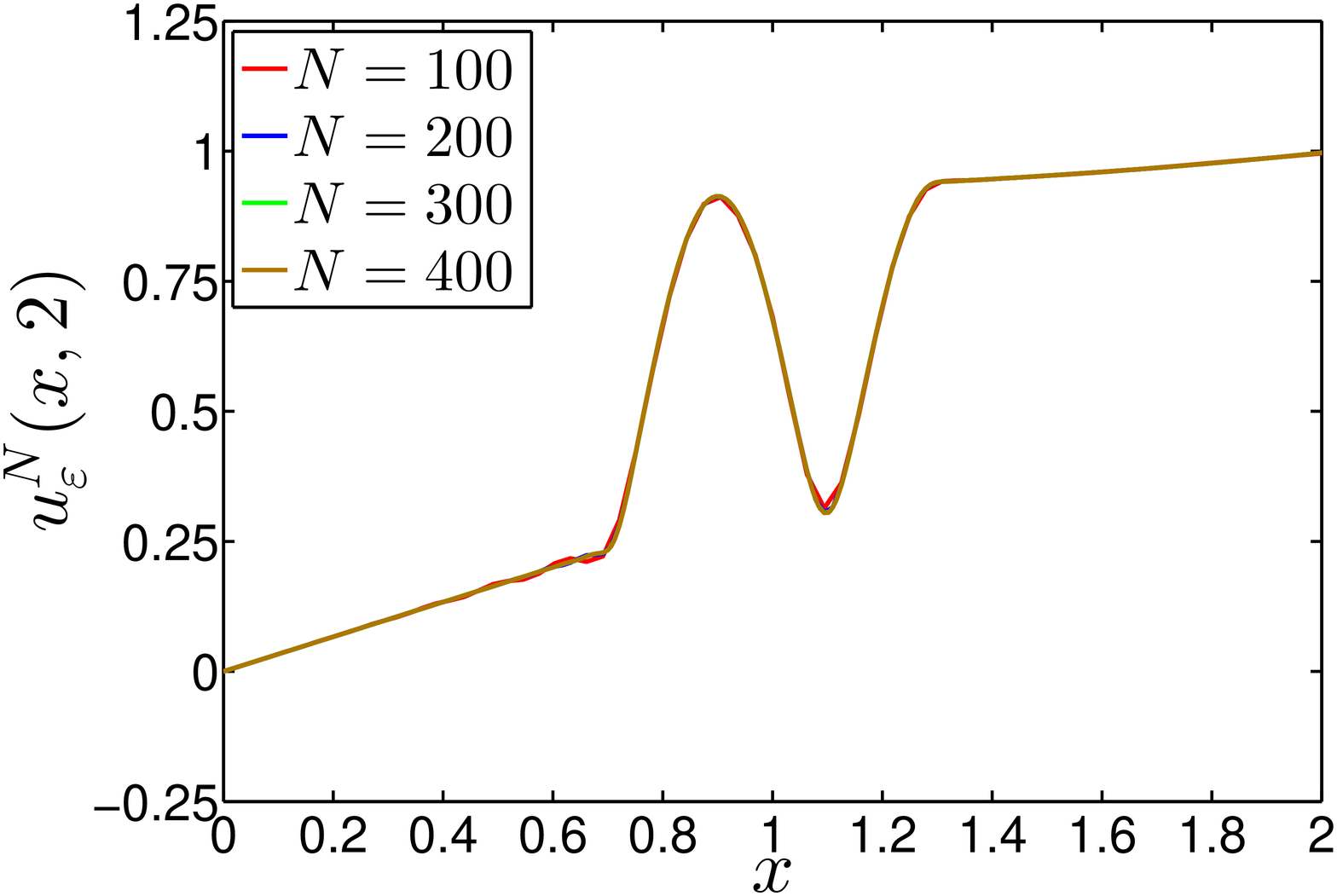} } \quad \subfigure{ \includegraphics[width=5.80cm]{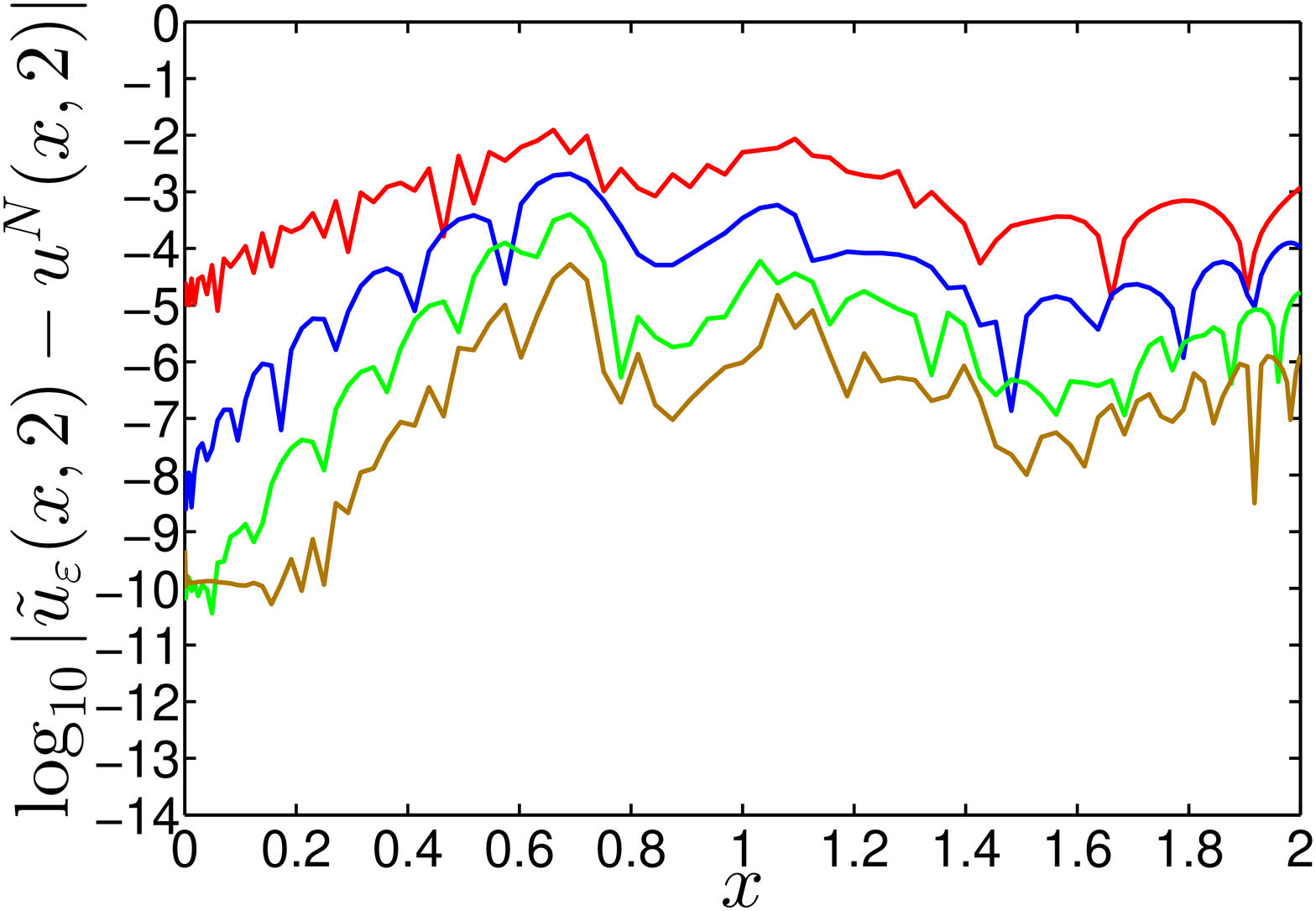} } }
\caption{Spectral solution (left) and pointwise error (right) to the Burgers equation at $t = 2$, using the regularization $S^{m,k}_{\varepsilon}$.}
\label{fig_U_spectral_multiple_particles_burgers_S_13_4}
\end{figure}

\begin{figure}[h]
\centering
\mbox{ \subfigure{ \includegraphics[width=5.80cm]{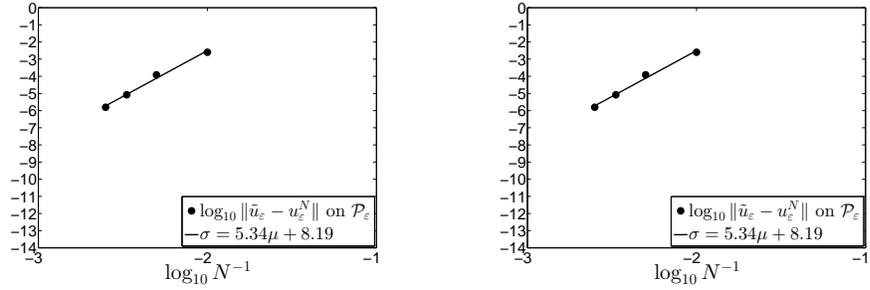} } \quad \subfigure{ \includegraphics[width=5.80cm]{Fig7_left} } }
\caption{Error in $\|\cdot\|_{L^{2}_{w}}$ norm and linear regression to estimate convergence order on $\mathcal{P}_{\varepsilon}$ (left) and $\mathcal{Q}_{\varepsilon}$ (right) to the Burgers equation at $t = 2$.}
\label{fig_conv_error_U_spectral_multiple_particles_burgers_S_13_4}
\end{figure}

\pagebreak

\begin{table}[!h]
\begin{center}
\def\arraystretch{2.0}
\caption{Optimal scaling, pointwise error in the regularization of the source term on $\mathcal{P}_{\varepsilon}$ and convergence order of the spectral solution to the Burgers equation at $t = 2$ on $\mathcal{P}_{\varepsilon} \cup \mathcal{Q}_{\varepsilon}$, when S is replaced by $\tilde{S}^{m,k}$.}
\label{table_m_k_epsi_order_accuracy_burgers}
\footnotesize{
\begin{tabular}{|l|l|l|l|l|}
\hline
\multicolumn{5}{|c|}{$k = 4$} \\
\hline
\multicolumn{1}{|c|}{$m$} & \multicolumn{1}{c|}{$\varepsilon$} & \multicolumn{1}{c|}{$\log_{10} \lvert S(x) - \tilde{S}^{m,k}(x) \rvert$ on $\mathcal{P}_{\varepsilon}$} & \multicolumn{1}{c|}{Conv. order on $\mathcal{P}_{\varepsilon}$} & \multicolumn{1}{c|}{Conv. order on $\mathcal{Q}_{\varepsilon}$} \\
\hline
\multicolumn{1}{|c|}{5} & \multicolumn{1}{c|}{$4.0 \times 10^{-2}$} & \multicolumn{1}{c|}{$\mathcal{O}\left( 10^{-7} \right)$} & \multicolumn{1}{c|}{4.42} & \multicolumn{1}{c|}{3.72} \\
\hline
\multicolumn{1}{|c|}{9} & \multicolumn{1}{c|}{$9.5 \times 10^{-2}$} & \multicolumn{1}{c|}{$\mathcal{O}\left( 10^{-9} \right)$} & \multicolumn{1}{c|}{5.31} & \multicolumn{1}{c|}{3.98} \\
\hline
\multicolumn{1}{|c|}{13} & \multicolumn{1}{c|}{$1.5 \times 10^{-1}$} & \multicolumn{1}{c|}{$\mathcal{O}\left( 10^{-11} \right)$} & \multicolumn{1}{c|}{5.34} & \multicolumn{1}{c|}{5.61} \\
\hline
\multicolumn{1}{|c|}{17} & \multicolumn{1}{c|}{$2.1 \times 10^{-1}$} & \multicolumn{1}{c|}{$\mathcal{O}\left( 10^{-14} \right)$} & \multicolumn{1}{c|}{5.24} & \multicolumn{1}{c|}{5.66} \\
\hline
\end{tabular}
}
\end{center}
\end{table}

\section{Conclusions}\label{conslusions}
We have presented a high-order regularization technique with optimal scaling to approximate singular sources
given by the weighted summation of Dirac-deltas, in the numerical solution of scalar and one-dimensional
advection and Burgers equations with smooth initial data, using the spectral Chebyshev collocation method.

The regularization approximates the source term with high-order of accuracy away from the singularities, according
to the number of vanishing moments, smoothness and the optimal scaling parameter of the approximate Dirac delta in \cite{jp_siam_1}.

Our numerical experiments show that the regularization leads to the expected order of accuracy
in the spectral solution away from the singularities, at a certain number of vanishing moments. In
particular, superconvergence was observed in the linear advection problem as long as the number of vanishing moments
increases.

While the focus of this paper is on  particle methods, the proposed regularization to weighted summation of Dirac-delta functions
using the convolution operator, is generally applicable to a much broader range of problems that involve convolution,
including (but certainly not limited to)  high-order interpolation for image and signal processing, superconvergence
in spectral methods, and higher-order level set methods.

Further work is directed to the implementation of the proposed regularization technique in the numerical
simulation of particle-laden flows with shocks, through the particle-source-in-cell method \cite{jacobs}.


\bibliographystyle{spmpsci}      
\bibliography{biblio}


\end{document}